\newtheorem{theorem}{Theorem}[section]
\newtheorem{lemma}[theorem]{Lemma}
\newtheorem{proof}[theorem]{Proof}
\newtheorem{example}{Example}
\journal{Linear Algebra and Its Applications}
\begin{document}

\begin{frontmatter}

\title{A Short Note on Multilevel Toeplitz Matrices}
\tnotetext[mytitlenote]{Fully documented templates are available in the elsarticle package on \href{http://www.ctan.org/tex-archive/macros/latex/contrib/elsarticle}{CTAN}.}

\author{Lei Cao \fnref{myfootnote}}
\address{Department of Mathematics, Halmos College, Nova Southeastern University, FL 33314}

\author{Selcuk Koyuncu \corref{mycorrespondingauthor}}
\cortext[mycorrespondingauthor]{Corresponding author}
\address{Department of Mathematics, University of North Georgia, GA 30566}
\ead{skoyuncu@ung.edu}

\begin{abstract}
Chien, Liu, Nakazato and Tam proved that all $n\times n$ classical Toeplitz matrices (one-level Toeplitz matrices) are unitarily similar to complex symmetric matrices via two types of unitary matrices and the type of the unitary matrices only depends on the parity of $n.$  In this paper we extend their result to multilevel Toeplitz matrices that any multilevel Toeplitz matrix is unitarily similar to a complex symmetric matrix. We provide a method to construct the unitary matrices that uniformly turn any multilevel Toeplitz matrix to a complex symmetric matrix by taking tensor products of these two types of unitary matrices for one-level Toeplitz matrices according to the parity of each level of the multilevel Toeplitz matrices. In addition, we introduce a class of complex symmetric matrices that are unitarily similar to some $p$-level Toeplitz matrices.
\end{abstract}

\begin{keyword} Multilevel Toeplitz matrix; Unitary similarity; Complex symmetric matrices

\MSC[2010] 15B05; 15A15
\end{keyword}

\end{frontmatter}


\section{Introduction}

Although every complex square matrix is unitarily similar to a complex symmetric matrix (see Theorem 4.4.24, \cite{HJ}), it is known that not every $n\times n$ matrix is unitarily similar to a complex symmetric matrix when $n\geq 3$ (See \cite{GP}). Some characterizations of matrices unitarily equivalent to a complex symmetric matrix (UECSM) were given by \cite{BS} and \cite{GPW}. Very recently, a constructive proof that every Toeplitz matrix is unitarily similar to a complex symmetric matrix was given in \cite{TamLiu} in which the unitary matrices turning all $n\times n$ Toeplitz matrices to complex symmetric matrices was given explicitly. An interesting fact was that the unitary matrices only depend on the parity of the size.

Multilevel Toeplitz matrices arise naturally in multidimensional Fourier analysis when a periodic multivariable real function is considered \cite{TZ}. In this paper, we show that any multilevel Toeplitz matrix is unitarily similar to a complex symmetric matrix. Along the line in \cite{TamLiu}, a constructive proof is given. One can take tensor product of the unitary matrices defined in \cite{TamLiu} and identity matrices appropriately to construct the unitary matrix turning any multilevel Toeplitz matrix to a complex symmetric matrix which only depends on the parity of the size of each level. In section 4, we provide two examples of constructing the unitary transition matrices of a 2-level Toeplitz matrix and a 3-level Toeplitz matrix to illustrate our main results in section 3. The converse is considered in Section 5, in which we give the necessary and sufficient condition for a $2^p \times 2^p$ complex symmetric matrix similar to a $p$-level Toeplitz matrix under the unitary transformation given in Section 3.

\section{Preliminary and Notations}

A classical 1-level matrix $T_n \in {\mathbf{C}}^{n\times n}$ is called Toeplitz if
it  has constant entries along its diagonals, i.e, if it is of the
form
$$T_n=\begin{bmatrix} t_0 & t_{-1}& \cdots& t_{-n+1} \\
t_1 & t_0 & \ddots&t_{-n+2} \\
\vdots&\ddots&\ddots&\vdots \\
t_{-1+n}& \ldots& t_1 & t_0 \end{bmatrix}.$$

A $p$-level Toeplitz matrix, denoted by $T^{(p)},$ has Toeplitz structure on each level and
corresponds to a $p$-variate generating function.

 For an integer $p\geq 1,$ a $p$-level Toeplitz matrix of size $(n_0 n_1 n_2  n_3 \cdots n_p) \times (n_0n_1 n_2\cdots n_p)$ where $n_0=1$ and $n_i\in \mathbf{N}$ for $i=1,2,\ldots, p,$ is a block
Toeplitz
matrix of the form $$T^{(p)}=\begin{bmatrix} T_0^{(p-1)} & T_{-1}^{(p-1)}& \cdots& T_{-n_p+1}^{(p-1)} \\
T_1^{(p-1)} & T_0^{(p-1)} & \ddots&T_{-n_p+2}^{(p-1)} \\
\vdots&\ddots&\ddots&\vdots \\
T_{-1+n_p}^{(p-1)}& \ldots& T_1^{(p-1)} & T_0^{(p-1)} \end{bmatrix},$$ where each block $T_l^{(p-1)}$
is itself a $(p-1)$-level Toeplitz matrix of size $n_1 \cdot n_2  \cdots  n_{p-1} .$
For instance if p=2, we have the following two-level Toeplitz matrix with Toeplitz blocks
\begin{equation}T^{(2)}=\begin{bmatrix}T_0^{(1)} & T_{-1}^{(1)} \\ T_{1}^{(1)} & T_{0}^{(1)}\end{bmatrix}=\begin{bmatrix}

 t_{0,0}  &   t_{0,-1}  &    t_{-1,0}     &  t_{-1,-1} \\
 t_{0,1}  &   t_{0,0}   &    t_{-1,1}     &  t_{-1,0} \\
 t_{1,0}  &   t_{1,-1}  &    t_{0,0}      &  t_{0,-1} \\
 t_{1,1}  &   t_{1,0}   &    t_{0,1}      &  t_{0,0}
\end{bmatrix},\end{equation} where $T_0^{(1)}=\begin{bmatrix}
 t_{0,0}  &   t_{0,-1} \\
 t_{0,1}  &   t_{0,0} \end{bmatrix},T_{-1}^{(1)}=\begin{bmatrix}t_{-1,0}     &  t_{-1,-1} \\t_{-1,1}     &  t_{-1,0} \end{bmatrix}$ and $ T_1^{(1)}=\begin{bmatrix}
 t_{1,0}  &   t_{1,-1} \\
 t_{1,1}  &   t_{1,0} \end{bmatrix}$ are classical $1$-level Toeplitz matrices.

More generally, let $p\in \mathbf{N}.$ For $0\leq i\leq p,$ let $n_i\in \mathbf{N}$ with $n_0=1.$ Denote $\displaystyle s_k=\prod_{i=0}^k n_i$ for $k=1,2,\ldots, p.$ Denote
$$T^{(0)}_{i-j}=t_{i-j} \ \ \  {\rm for} \ \  |i-j|\leq n_1-1, $$ where $t_{i-j} \in \mathbf{C}.$ Then a $p$-level Toeplitz matrix, $T^{(p)}$ is of size $s_p$ and denoted by
$$T^{(p)}=\begin{bmatrix} T_0^{(p-1)} & T_{-1}^{(p-1)}& \cdots& T_{-n_p+1}^{(p-1)} \\
T_1^{(p-1)} & T_0^{(p-1)} & \ddots&T_{-n_p+2}^{(p-1)} \\
\vdots&\ddots&\ddots&\vdots \\
T_{-1+n_p}^{(p-1)}& \ldots& T_1^{(p-1)} & T_0^{(p-1)} \end{bmatrix},$$ where the $(i,j)$th block of $T^{(p)}$
is the $(p-1)$-level Toeplitz matrix, $T^{(p-1)}_{i-j},$ of size $s_{p-1}$ for $|i-j|\leq n_p-1.$
Note that $1$-level Toeplitz matrix $T^{(1)}$ is a regular Toeplitz matrix. Using the notation of $p$-level Toeplitz matrices, the main result in \cite{TamLiu} is stated as the following theorems.

\begin{theorem}\label{TL}(Theorem 3.3 \cite{TamLiu})
Every $1$-level Toeplitz matrix $T\in \mathbf{C}_{n\times n}$ is unitarily similar to
a symmetric matrix. Moreover, the following $n$ by $n$ even and odd unitary matrices  uniformly turn
all Toeplitz matrices with even sizes and odd sizes into symmetric matrices respectively via similarity:
\begin{enumerate}[(i)]
\item when $n=2m$ with $m\geq 1,$
\begin{equation}\label{Ue}U(n)=\frac{1}{\sqrt{2}}\begin{pmatrix} 1 & && & & i \\ & \ddots && & \iddots & \\ && 1 & i &&\\ && 1 & -i &&\\& \iddots && & \ddots & \\  1 & && & & -i  \end{pmatrix}\end{equation}

\item when $n=2m+1$ with $m\geq 1,$
\begin{equation}\label{Uo}U(n)=\frac{1}{\sqrt{2}}\begin{pmatrix} 1 & && && & i \\ & \ddots &&& & \iddots & \\ && 1 &0& i &&\\ &&0&\sqrt{2}&0&&\\&& 1 &0& -i &&\\& \iddots &&& & \ddots & \\  1 & &&& & & -i  \end{pmatrix}.\end{equation}

\end{enumerate}

\end{theorem}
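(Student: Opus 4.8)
The plan is to verify directly that the similarity $T \mapsto U(n)^{*} T U(n)$ produces a symmetric matrix, reducing everything to a single structural identity satisfied by the matrices in \eqref{Ue} and \eqref{Uo}. Throughout let $J$ denote the $n \times n$ reversal (exchange) matrix, i.e.\ $J_{ij} = 1$ when $i + j = n + 1$ and $0$ otherwise, so that $J = J^{T} = J^{-1}$. The first ingredient is the \emph{persymmetry} of Toeplitz matrices: since the $(i,j)$ entry of $T$ depends only on $i - j$, one checks immediately that $T^{T} = J T J$ for every Toeplitz $T$. This is the only property of $T$ that the argument will use.

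Next I would set $C = U(n)^{*} T U(n)$ (so $C$ is unitarily similar to $T$, as $U(n)^{*} = U(n)^{-1}$) and compute its transpose. Using $(U^{*})^{T} = \overline{U}$ together with the persymmetry identity,
\[ C^{T} = U(n)^{T}\, T^{T}\, \overline{U(n)} = U(n)^{T}\, J\, T\, J\, \overline{U(n)}. \]
The goal is to show $C^{T} = C$, i.e.\ that the conjugating factors collapse correctly. This will follow from the key identity
\begin{equation}
U(n)\, U(n)^{T} = J, \label{keyid}
\end{equation}
together with the elementary fact $U^{T}\overline{U} = I$ valid for any unitary $U$ (it is the transpose of $U^{*}U = I$). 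Indeed, \eqref{keyid} is equivalent to $U(n)^{T} = U(n)^{*} J$ and, after right-multiplying by $\overline{U(n)}$ and using $U^{T}\overline{U} = I$, to $J\,\overline{U(n)} = U(n)$. Substituting the first form into $C^{T}$ and using $J^{2} = I$ yields $C^{T} = U(n)^{*} J\,(J T J)\,\overline{U(n)} = U(n)^{*} T\,(J\,\overline{U(n)}) = U(n)^{*} T U(n) = C$, as desired.

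The main obstacle is therefore the verification of \eqref{keyid} for each of the two families \eqref{Ue} and \eqref{Uo}; this is where the specific form of the matrices, and the dependence on the parity of $n$, enters. I would prove it by a direct entrywise computation of $(U U^{T})_{ij} = \sum_{k} U_{ik} U_{jk}$, organized by pairing row $k$ with row $n+1-k$. In the even case $n = 2m$ each such row has exactly two nonzero entries, placed symmetrically about the center with values $\tfrac{1}{\sqrt{2}}\{1, \pm i\}$; the diagonal inner products vanish because $\tfrac{1}{2}(1 + i^{2}) = 0$ and $\tfrac{1}{2}((-i)^{2} + 1) = 0$, while the only surviving cross terms occur for $j = n+1-i$, where the two contributions $\tfrac{1}{2}(1\cdot 1)$ and $\tfrac{1}{2}(i\cdot(-i))$ add to $1$, reproducing exactly the anti-diagonal pattern of $J$. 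The odd case $n = 2m+1$ is identical away from the central index, with the extra row and column carrying the single entry $\tfrac{1}{\sqrt{2}}\cdot\sqrt{2} = 1$ at position $(m+1, m+1)$, which supplies precisely the central $1$ of $J$ and is orthogonal (as an unconjugated inner product) to every other row. Once \eqref{keyid} is established for both parities, the preceding paragraph closes the argument uniformly, and the fact that $U(n)$ depends only on the parity of $n$ is manifest from the construction.
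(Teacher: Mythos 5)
Your argument is correct, and it is worth noting that the paper does not actually prove this statement at all --- it is imported verbatim from \cite{TamLiu} (their Theorem 3.3), so the only in-paper material to compare against is the style of proof the authors use for their own generalization (Theorem \ref{mainThm}), which is a brute-force block-by-block computation of $(V^*\tilde S V)_{pq}$ split into index ranges. Your route is genuinely different and, to my mind, cleaner: you isolate the single property of Toeplitz matrices that matters, namely persymmetry $T^{T}=JTJ$ (immediate from $T_{ij}=t_{i-j}$), and reduce the whole theorem to the structural identity $U(n)U(n)^{T}=J$, which you then verify entrywise for both parities; the parity dependence is thereby localized entirely in that one finite check. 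All the algebra goes through: $UU^{T}=J$ together with unitarity gives $U^{T}=U^{*}J$ and $J\overline{U}=U$, and then $C^{T}=U^{T}(JTJ)\overline{U}=U^{*}J^{2}T(J\overline{U})=U^{*}TU=C$. The one small thing you lean on without checking is the unitarity of $U(n)$ itself (you use $U^{*}=U^{-1}$ and $U^{T}\overline{U}=I$); this is asserted in the statement and is a one-line computation of the same flavor as your $UU^{T}=J$ check (the paired rows are orthonormal under the \emph{conjugate} inner product since $\tfrac{1}{2}(1\cdot 1+i\cdot\overline{i})=1$ and $\tfrac{1}{2}(1\cdot 1+i\cdot\overline{-i})=0$), but for a self-contained proof you should say it. What your approach buys is conceptual clarity --- it makes transparent that $U^{*}TU$ is symmetric for \emph{every} persymmetric $T$, not just Toeplitz ones, and that any unitary $U$ with $UU^{T}=J$ would do; what the computational approach buys is the explicit formula for the entries of the symmetrized matrix (as in Theorem \ref{thm2}), which your argument does not produce.
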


Let $J_n$ be the $n\times n$ matrix with all elements zero except the elements on the anti diagonal which are all $1'$s. That is, $$J_n=\begin{pmatrix}&&&&1 \\ &&&1& \\ && \iddots &&\\ & 1 &&&\\ 1&&&& \end{pmatrix}.$$ Then a Topeplitz matrix with any size can be unitarily turned into a symmetric matrix by the matrix $$U = \frac{1}{\sqrt{2}}(I_n+iJ_n)$$ which is clearly unitary.

\begin{theorem}(Theorem 3.1 \cite{TamLiu}) \label{thm2} Every $n\times n$ Toeplitz matrix $T=(t_{ij})$ is unitarily similar to a symmetric matrix $B=(b_{ij})$ via the unitary matrix $$U = \frac{1}{\sqrt{2}}(I_n+iJ_n).$$ More specifically,
$$b_{ij}=\frac{1}{2}(t_{i-j}+t_{j-i})+\frac{i}{2}(t_{i+j-n-1}-t_{n+1-i-j}).$$

\end{theorem}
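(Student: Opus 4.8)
The plan is to obtain the symmetric matrix together with the explicit formula by directly computing the conjugation $B=U^{*}TU$ and reading off its entries. First I would record the elementary algebraic properties of $J_n$: it is real, symmetric, and an involution, so $J_n^{2}=I_n$ and $J_n^{*}=J_n$. Hence $U=\frac{1}{\sqrt{2}}(I_n+iJ_n)$ satisfies $U^{*}=\frac{1}{\sqrt{2}}(I_n-iJ_n)$ and $UU^{*}=\frac{1}{2}(I_n+J_n^{2})=I_n$, confirming that $U$ is unitary. Expanding the product then gives
\begin{equation*}
B=U^{*}TU=\tfrac{1}{2}(I_n-iJ_n)\,T\,(I_n+iJ_n)=\tfrac{1}{2}\bigl(T+J_nTJ_n\bigr)+\tfrac{i}{2}\bigl(TJ_n-J_nT\bigr),
\end{equation*}
where we used $-i^{2}J_nTJ_n=J_nTJ_n$ to collect the real part.

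Next I would compute the matrices appearing above entrywise, using that left multiplication by $J_n$ reverses the row index and right multiplication reverses the column index: for any matrix $A$ one has $(J_nA)_{ij}=A_{n+1-i,\,j}$, $(AJ_n)_{ij}=A_{i,\,n+1-j}$, and $(J_nAJ_n)_{ij}=A_{n+1-i,\,n+1-j}$. Substituting the Toeplitz structure $T_{ij}=t_{i-j}$ yields $(J_nTJ_n)_{ij}=t_{j-i}$, $(TJ_n)_{ij}=t_{i+j-n-1}$, and $(J_nT)_{ij}=t_{n+1-i-j}$, so that
\begin{equation*}
b_{ij}=\tfrac{1}{2}\bigl(t_{i-j}+t_{j-i}\bigr)+\tfrac{i}{2}\bigl(t_{i+j-n-1}-t_{n+1-i-j}\bigr),
\end{equation*}
which is the claimed expression. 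A small check I would include is that for $1\le i,j\le n$ the subscripts $i-j$, $i+j-n-1$, and $n+1-i-j$ all lie in $\{-(n-1),\dots,n-1\}$, so every $t_{\cdot}$ on the right-hand side is a genuine entry of $T$.

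Finally, the symmetry of $B$ is immediate from the formula: interchanging $i$ and $j$ leaves both $t_{i-j}+t_{j-i}$ and the sum $i+j$ unchanged, hence $b_{ij}=b_{ji}$ and $B$ is complex symmetric. The computation is entirely routine, so the only real pitfall is bookkeeping: one must fix the similarity convention correctly (the stated formula arises from $B=U^{*}TU$, whereas $UTU^{*}$ would flip the sign of the imaginary part) and keep the two index-reversal actions of $J_n$ straight.
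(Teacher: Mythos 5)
Your computation is correct: the identities $J_n^2=I_n$, $(J_nA)_{ij}=A_{n+1-i,j}$, $(AJ_n)_{ij}=A_{i,n+1-j}$ combined with $T_{ij}=t_{i-j}$ give exactly the stated $b_{ij}$, and the symmetry and index-range checks are right. Note that this paper supplies no proof of this statement at all---it is quoted verbatim as Theorem 3.1 of the cited reference---so there is nothing internal to compare against; your direct expansion of $U^*TU$ is the standard argument (and essentially the one in the cited source), and your closing remark about the $U^*TU$ versus $UTU^*$ convention is a worthwhile caution since the sign of the imaginary part depends on it.
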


\section{Multilevel Unitary Symmetrization}

Denote $U(n)$ an $n\times n$ unitary matrix and if $n$ is even, $U(n)$ is defined by (\ref{Ue}); if $n$ is odd, $U(n)$ is defined by (\ref{Uo}).

 \begin{theorem} \label{mainThm} Let $T^{(p)}$ be a $p$-level Toeplitz matrix of size $s_p$. Then there exists a unitary matrix $U$ of size $s_p,$ such that $$U^*T^{(p)}U$$ is symmetric and the unitary transition matrix $U$ is   $$U=U_1\cdots U_{p-1}U_p,$$ where

 $$U_i=I_{n_p}\otimes I_{n_{p-1}}\otimes \ldots \otimes I_{n_{i+1}} \otimes U(n_i) \otimes I_{n_{i-1}}\otimes \ldots \otimes I_{n_2} \otimes I_{n_1}$$ for $i=1,2,3,\ldots,p.$
\end{theorem}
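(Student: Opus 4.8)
The plan is to induct on the number of levels $p$. The base case $p=1$ is exactly Theorem~\ref{TL}, which already guarantees that $U(n_1)^\ast T^{(1)}U(n_1)$ is symmetric. For the inductive step I would assume the statement for $(p-1)$-level Toeplitz matrices: writing $\tilde U=\tilde U_1\cdots\tilde U_{p-1}$ with $\tilde U_i=I_{n_{p-1}}\otimes\cdots\otimes U(n_i)\otimes\cdots\otimes I_{n_1}$ of size $s_{p-1}$, the matrix $\tilde U^\ast T^{(p-1)}\tilde U$ is symmetric for every $(p-1)$-level Toeplitz matrix of size $s_{p-1}$.

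The first step is to recast both $T^{(p)}$ and $U$ in tensor form on $\mathbf{C}^{n_p}\otimes\mathbf{C}^{s_{p-1}}$. For $-(n_p-1)\le k\le n_p-1$ let $Z_k$ denote the $n_p\times n_p$ matrix carrying ones on its $k$-th diagonal; each $Z_k$ is itself a Toeplitz matrix, and the block structure of a $p$-level Toeplitz matrix gives the decomposition $T^{(p)}=\sum_k Z_k\otimes T^{(p-1)}_k$, where the block $T^{(p-1)}_k$ is a $(p-1)$-level Toeplitz matrix of size $s_{p-1}$. Next I would observe that $U_i=I_{n_p}\otimes\tilde U_i$ for $i\le p-1$ while $U_p=U(n_p)\otimes I_{s_{p-1}}$, so by the mixed-product rule $(A\otimes B)(C\otimes D)=AC\otimes BD$ the prescribed product collapses to $U=U_1\cdots U_{p-1}U_p=(I_{n_p}\otimes\tilde U)(U(n_p)\otimes I_{s_{p-1}})=U(n_p)\otimes\tilde U$. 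Equivalently, the factors $U_i$ act on distinct tensor legs and therefore pairwise commute, so the order of the product is immaterial.

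With these two identities in hand, the conjugation separates leg by leg:
\[
U^\ast T^{(p)}U=\sum_k\bigl(U(n_p)^\ast Z_k U(n_p)\bigr)\otimes\bigl(\tilde U^\ast T^{(p-1)}_k\tilde U\bigr)=\sum_k A_k\otimes B_k .
\]
Here $A_k:=U(n_p)^\ast Z_k U(n_p)$ is symmetric because $Z_k$ is an $n_p\times n_p$ Toeplitz matrix and Theorem~\ref{TL} applies to it, while $B_k:=\tilde U^\ast T^{(p-1)}_k\tilde U$ is symmetric by the induction hypothesis. Since a tensor product of symmetric matrices is symmetric, $(A_k\otimes B_k)^{T}=A_k^{T}\otimes B_k^{T}=A_k\otimes B_k$, and hence the sum $U^\ast T^{(p)}U$ is symmetric, completing the induction.

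The computation itself is short; the real content, and the step I would be most careful about, is the bookkeeping that makes it short. Specifically, I must verify that the prescribed product $U_1\cdots U_p$ genuinely telescopes to the single tensor factor $U(n_p)\otimes\tilde U$ (this is exactly where the fact that each $U_i$ touches only its own level is used), and that the one-level theorem is invoked in the slightly stronger form needed here, namely applied to each elementary band matrix $Z_k$ rather than to one generic Toeplitz matrix. Once one sees that the problem decouples across the two outermost tensor legs, the symmetry is forced by the elementary observation that symmetry is preserved under tensor products.
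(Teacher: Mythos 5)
Your proof is correct, but it takes a genuinely different route from the paper's. The paper also inducts on $p$ and also begins by conjugating with $I_{n_{k+1}}\otimes\tilde U$ to symmetrize the blocks, but it then handles the outer factor $U(n_{k+1})\otimes I_{s_k}$ by a direct block-entry computation: it writes out $(V^*\tilde S V)_{pq}$ explicitly, splits into the four index regions, checks $S_{pq}=S_{qp}$ by hand, and repeats (in sketch) for odd $n_{k+1}$ --- in effect re-proving the one-level theorem with symmetric blocks in place of scalars. Your decomposition $T^{(p)}=\sum_k Z_k\otimes T^{(p-1)}_k$ into elementary shift matrices, combined with the mixed-product rule and the observation that $U_1\cdots U_p=U(n_p)\otimes\tilde U$, replaces all of that with the single line $U^*T^{(p)}U=\sum_k\bigl(U(n_p)^*Z_kU(n_p)\bigr)\otimes\bigl(\tilde U^*T^{(p-1)}_k\tilde U\bigr)$, where each $Z_k$ is itself Toeplitz so Theorem~\ref{TL} (used uniformly, i.e.\ the same $U(n_p)$ for every $Z_k$) gives symmetry of the first factor and the induction hypothesis gives symmetry of the second; closure of symmetry under $\otimes$ and sums finishes it. What your approach buys is the complete elimination of the parity case split and of the index bookkeeping, and it isolates exactly what is used from the one-level result (that one fixed unitary symmetrizes all Toeplitz matrices of that size). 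What the paper's computation buys is explicit formulas for the blocks of the symmetrized matrix, in the spirit of the entrywise formula in Theorem~\ref{thm2}, which your argument does not produce. Both arguments are valid; yours is shorter and, in my view, cleaner.
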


\begin{proof}
 We prove it by mathematical induction on $p$.

 For $p=1,$ it is true due to Theorem 3.3 in \cite{TamLiu}.

 Assume the result is true for $k$ meaning that there exists a unitary matrix $\tilde{U}$ of size $s_{k} \times s_{k}$ such that $$\tilde{U}^*T^{(k)}\tilde{U}$$ is symmetric for any $k$-level Toeplitz matrix with size $s_k.$

  That is, any $k$-level Toeplitz matrix $T^{(k)}$ is unitarily similar to a symmetric matrix via $\tilde{U}=U_1 \cdots U_k.$ This implies the following $$U_k^*U_{k-1}^*\cdots U_1^*T^{(k)}U_1U_2\cdots U_k=\tilde{U}^*T^{(k)}\tilde{U}$$ is symmetric.

Let us prove the result for case $p=k+1.$

Consider a $(k+1)$-level Toeplitz matrix $T^{(k+1)}$ with size $s_{k+1}$
$$T^{(k+1)}=\begin{pmatrix} T_0^{(k)} & T_{-1}^{(k)}& \cdots& T_{-n_{k+1}+1}^{(k)} \\
T_1^{(k)} & T_0^{(k)} & \cdots&T_{-n_{k+1}+2}^{(k)} \\
\vdots&\vdots&\ddots&\vdots \\
T_{-2+n_{k+1}}^{(k)}& T_{-3+n_{k+1}}^{(k)} & \ldots & T_{-1}^{(k)}\\
T_{-1+n_{k+1}}^{(k)}& T_{-2+n_{k+1}}^{(k)} & \ldots & T_0^{(k)} \end{pmatrix}.$$
where all blocks $T_{i-j}^{(k)}$ are $k$-level Toeplitz matrices of  size $s_k \times s_k$ and note that $s_k= \displaystyle \prod_{i=0}^{k} n_i.$
 Next we define $$\hat{U}=I_{n_{k+1}}\otimes\tilde{U}=\begin{pmatrix} \tilde{U} & 0 & \ldots & 0\\ 0&  \tilde{U}& \ldots & 0  \\ \vdots&\vdots&\ddots&\vdots  \\0 & 0 &\ldots & \tilde{U} \end{pmatrix}.$$ 

Let $\tilde{S}= \hat{U}^*T^{(k+1)}\hat{U}.$ Then
$$\tilde{S}=\begin{pmatrix}  \tilde{U}^*T_0^{(k)}\tilde{U} & \tilde{U}^*T_{-1}^{(k)}\tilde{U}& \cdots& \tilde{U}^*T_{-n_{k+1}+1}^{(k)}\tilde{U} \\
\tilde{U}^*T_1^{(k)}\tilde{U} & \tilde{U}^*T_0^{(k)}\tilde{U} & \cdots& \tilde{U}^*T_{-n_{k+1}+2}^{(k)}\tilde{U} \\
\vdots&\vdots&\ddots&\vdots \\
\tilde{U}^*T_{-1+n_{k+1}}^{(k)}\tilde{U}& \tilde{U}^*T_{-2+n_{k+1}}^{(k)}\tilde{U} & \ldots & \tilde{U}^*T_0^{(k)}\tilde{U} \end{pmatrix}$$
By induction hypothesises, $\tilde{U}^*T_{i-j}^{(k)}\tilde{U}$ is symmetric.  Denote  $\tilde{U}^*T_{i-j}^{(k)}\tilde{U}$ by $\tilde{S}_{i-j},$ then
$$ \tilde{S}=\begin{pmatrix} \tilde{S}_0 & \tilde{S}_{-1}& \cdots& \tilde{S}_{-n_{k+1}+1}\\
\tilde{S}_1 & \tilde{S}_0 & \cdots&\tilde{S}_{-n_{k+1}+2} \\
\vdots&\vdots&\ddots&\vdots \\
\tilde{S}_{-1+n_{k+1}}& \tilde{S}_{-2_{k+1}+n} & \ldots & \tilde{S}_0 \end{pmatrix},$$ where $\tilde{S}_{t}$ is symmetric for $t=-n_{k+1}+1,-n_{k+1}+2,\ldots,-1,0,1,\ldots, n_{k+1}-1.$

 Let $$U_{k+1}=U(n_{k+1})\otimes I_{s_k},$$

that is, $$U_{k+1}=\frac{1}{\sqrt{2}}\begin{pmatrix} I_{s_k} & && & & iI_{s_k} \\ & \ddots && & \iddots & \\ && I_{s_k} & iI_{s_k} &&\\ && I_{s_k} & -iI_{s_k} &&\\& \iddots && & \ddots & \\  I_{s_k} & && & & -iI_{s_k}  \end{pmatrix}$$
if $n_{k+1}$ is even;
$$U_{k+1}=\frac{1}{\sqrt{2}}\begin{pmatrix} I_{s_k} & && && & iI_{s_k} \\ & \ddots &&& & \iddots & \\ && I_{s_k} &0& iI_{s_k} &&\\ &&0&\sqrt{2}I_{s_k}&0&&\\&& I_{s_k} &0& -iI_{s_k} &&\\& \iddots &&& & \ddots & \\  I_{s_k} & &&& & & -iI_{s_k}  \end{pmatrix}$$
if $n_{k+1}$ is odd.

It suffices to show that $U_{k+1}^*\tilde{S}U_{k+1}$ is symmetric. Let $ V=U_{k+1}.$
\begin{itemize}

\item Suppose $n_{k+1}$ is even, that is $n_{k+1}=2t$ for some integer $t.$ Then

$$\sqrt{2}V_{ij}=\left\{
\begin{aligned}
I_m \ \ \ \ &1\leq i \leq t \ {\rm  and} \  i=j\\
-iI_m \ \ \ \ &t+1\leq i \leq n_{k+1} \ {\rm  and} \  i=j  \\
iI_m \ \ \ \ &1\leq i \leq t \ {\rm  and} \  i+j=n_{k+1}+1  \\
I_m \ \ \ \ &t+1\leq i \leq n_{k+1} \ {\rm  and} \  i+j=n_{k+1}+1  \\
0 \ \ \ \ &{\rm otherwise}
\end{aligned}
\right.$$

and

$$\sqrt{2}V^*_{ij}=\left\{
\begin{aligned}
I_m \ \ \ \ &1\leq i \leq t \ {\rm  and} \  i=j\\
iI_m \ \ \ \ &t+1\leq i \leq n_{k+1} \ {\rm  and} \  i=j  \\
I_m \ \ \ \ &1\leq i \leq t \ {\rm  and} \  i+j=n_{k+1}+1  \\
-iI_m \ \ \ \ &t+1\leq i \leq n_{k+1} \ {\rm  and} \  i+j=n_{k+1}+1  \\
0 \ \ \ \ &{\rm otherwise}
\end{aligned}
\right.$$

which gives us that

$$(V^*\tilde{S})_{ij}=\left\{
\begin{aligned}
S_{i-j} +S_{n_{k+1}-j-(i+1)} \ \ \ \ &1\leq i \leq t \\
-i(S_{n_{k+1}-j-(i-1)}+iS_{i-j}) \ \ \ \ &t+1\leq i \leq n_{k+1}
\end{aligned}
\right.$$

Denote $$S=V^*\tilde{S} V=\begin{pmatrix} S_0 & S_{-1}& \cdots& S_{-n_{k+1}+1}\\
S_1 & S_0 & \cdots&S_{-n_{k+1}+2} \\
\vdots&\vdots&\ddots&\vdots \\
S_{-1+n_{k+1}}& S_{-2_{k+1}+n} & \ldots & S_0 \end{pmatrix}=\begin{pmatrix}Z_1 & Z_2 \\ Z_3 & Z_4 \end{pmatrix}$$ where $Z_1,Z_2, Z_3$ and $Z_4$ have the same size  and let $1\leq p,q\leq n_{k+1} $ be the indices. Then we get,
\begin{enumerate}[(i)]
\item For $1\leq p \leq t $ and $  1\leq q \leq t,$
\begin{equation}\label{eq1}
S_{pq}=\tilde{S}_{-(p-q)}+\tilde{S}_{p-q}+\tilde{S}_{-(2t-p-q+1)}+\tilde{S}_{2t-p-q+1}
\end{equation}

\item For $1\leq p \leq t$ and  $t+1\leq q \leq 2t$
\begin{equation}\label{eq2}
S_{pq}=i(\tilde{S}_{-(2t-p-q+1)}+\tilde{S}_{q-p})-i(\tilde{S}_{-(q-p)}+\tilde{S}_{2t-p-q+1})
\end{equation}

\item For $t+1\leq p \leq 2t$ and  $1\leq q \leq t,$
\begin{equation}\label{eq3}
S_{pq}= -i(\tilde{S}_{2t-p-q+1}+\tilde{S}_{-(p-q)})+i(\tilde{S}_{p-q}+\tilde{S}_{-(2t-p-q+1)})
\end{equation}

\item For $t+1\leq p \leq 2t$ and  $t+1\leq q \leq 2t,$
\begin{equation}\label{eq4}
S_{pq}= \tilde{S}_{q-p}+\tilde{S}_{p+q-1-2t}+\tilde{S}_{-(q-p)}+\tilde{S}_{-(2t+p+q+1)}
\end{equation}

\end{enumerate}

First note that \eqref{eq1} and \eqref{eq4} are the same due to the Toeplitz structure of $S.$  If we switch $p$ and $q$ in \eqref{eq1} or \eqref{eq4}, we have
\begin{eqnarray} \nonumber &&\tilde{S}_{-(q-p)}+\tilde{S}_{q-p}+\tilde{S}_{-(2t-q-p+1)}+\tilde{S}_{2t-q-p+1}\\
\nonumber &=& \tilde{S}_{p-q}+\tilde{S}_{-(p-q)}+\tilde{S}_{-(2t-q-p+1)}+\tilde{S}_{(2t-p-q+1)}
\end{eqnarray}
  which is equal to  \eqref{eq1} and \eqref{eq4} meaning that both $Z_1$ and $Z_4$ are symmetric. If we switch $p$ and $q$ in \eqref{eq2}, we have
  $$ i(\tilde{S}_{-(2t-q-p+1)}+\tilde{S}_{p-q})-i(\tilde{S}_{-(p-q)}+\tilde{S}_{2t-q-p+1})$$
  equal to \eqref{eq3} which shows that $Z_2=Z_3^t$ and $Z_2^t=Z_3.$  Hence $$S^t=\begin{pmatrix}Z_1^t & Z_3^t \\ Z_2^t & Z_4^t\end{pmatrix}=\begin{pmatrix}Z_1 & Z_2 \\ Z_3 & Z_4\end{pmatrix}=S.$$ Thus $S$ is symmetric.

\medskip

\item Suppose $n_{k+1}$ is odd. Then we can write $n_{k+1}=2t+1$ for some integer $t.$ Let $S=V^*\tilde{S}V.$ Similarly to the case for even, one can show $S_{pq}=S_{qp}$ for $p=1,2,\ldots,t,t+2,\ldots,2t+1$ and   $q=1,2,\ldots,t,t+2,\ldots,2t+1.$ In addition, straightforward calculation yields the $(t+1)$th row and the $(t+1)$th column as follows

    $$S_{pq}=\left\{
\begin{aligned}
\frac{\sqrt{2}}{2}(\tilde{S}_{t+1-p}+\tilde{S}_{p-t-1}) \ \ \ \  & 1\leq p \leq t\  {\rm  and}\ q=t+1 \\
\frac{\sqrt{2}}{2}(\tilde{S}_{q-t-1}+\tilde{S}_{t+1-q}) \ \ \ \  & p = t+1\  {\rm  and}\ 1\leq q \leq t \\
\tilde{S_0} \ \ \ \ & p=t+1\  {\rm  and}\ q=t+1\\
\frac{\sqrt{2}}{2}i(\tilde{S}_{q-t-1}+\tilde{S}_{t+1-q}) \ \ \ \  &  p=t+1\  {\rm  and}\ t+2 \leq q\leq 2t+1 \\
\frac{\sqrt{2}}{2}i(\tilde{S}_{t+1-p}+\tilde{S}_{p-t-1}) \ \ \ \  & t+2\leq p \leq 2t=1\  {\rm  and}\ q=t+1
\end{aligned}
\right.$$

Hence $S$ is symmetric.

\end{itemize}

\end{proof}

We also generalize Theorem \ref{thm2}, in which one does not need to consider the parity of the size. We denote $$V(n)=\frac{1}{\sqrt{2}}(I_n+iJ_n).$$

\begin{theorem} \label{Thm3.2}Let $T^{(p)}$ be a $p$-level Toeplitz matrix of size $s_p=\prod_{i=1}^p n_i.$ Then there exists a unitary matrix $V$ such that $V^*T^{(p)}V$ is symmetric, where $$V=\prod_{i=1}^n V_i$$ and $$V_i=I_{n_p}\otimes I_{n_{p-1}}\otimes \ldots \otimes I_{n_{i+1}} \otimes V(n_i) \otimes I_{n_{i-1}}\otimes \ldots \otimes I_{n_2} \otimes I_{n_1}$$ for $i=1,2,\ldots, p.$

\end{theorem}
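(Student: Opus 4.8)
The plan is to proceed by induction on $p$, mirroring the structure of the proof of Theorem \ref{mainThm} but exploiting the fact that the symmetrizer $V(n) = \frac{1}{\sqrt{2}}(I_n + iJ_n)$ works uniformly for every size, so no parity case-split is needed. For the base case $p = 1$ the statement is exactly Theorem \ref{thm2}. For the inductive step I would assume that any $k$-level Toeplitz matrix $T^{(k)}$ is symmetrized by $\tilde{V} = V_1 \cdots V_k$, and consider a $(k+1)$-level Toeplitz matrix $T^{(k+1)}$ written in block-Toeplitz form with $k$-level Toeplitz blocks $T^{(k)}_{i-j}$.

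First I would conjugate by $\hat{V} = I_{n_{k+1}} \otimes \tilde{V}$. Since this factor is block-diagonal, the conjugation acts blockwise, sending the $(i,j)$ block $T^{(k)}_{i-j}$ to $\tilde{V}^* T^{(k)}_{i-j} \tilde{V}$, which is symmetric by the induction hypothesis; writing $\tilde{S}_{i-j} = \tilde{V}^* T^{(k)}_{i-j}\tilde{V}$, the result $\tilde{S} = \hat{V}^* T^{(k+1)} \hat{V}$ is again block-Toeplitz, now with symmetric blocks $\tilde{S}_t$. Identifying $\hat{V}$ with $I_{n_{k+1}} \otimes \prod_{i=1}^{k} V_i$ shows this step agrees with peeling off the first $k$ factors of $V$.

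The crucial step is then conjugation by $V_{k+1} = V(n_{k+1}) \otimes I_{s_k}$. I would treat $\tilde{S}$ as a matrix with matrix entries, whose $(p,q)$ block is $\tilde{S}_{p-q}$, and observe that conjugation by $W \otimes I_{s_k}$ yields the matrix whose $(p,q)$ block is $\sum_{i,j}\overline{W_{ip}}\,W_{jq}\,\tilde{S}_{i-j}$ with $W = V(n_{k+1})$. This is exactly the computation behind Theorem \ref{thm2}, with the complex scalars $t_{i-j}$ replaced by the matrix blocks $\tilde{S}_{i-j}$ and the identical scalar coefficients, so by linearity the $(p,q)$ block of $S = V_{k+1}^* \tilde{S} V_{k+1}$ is
$$S_{pq} = \tfrac{1}{2}\big(\tilde{S}_{p-q} + \tilde{S}_{q-p}\big) + \tfrac{i}{2}\big(\tilde{S}_{p+q-n_{k+1}-1} - \tilde{S}_{n_{k+1}+1-p-q}\big).$$

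Finally I would verify symmetry of $S$ in two steps. At the block level, swapping $p$ and $q$ in the displayed formula leaves it unchanged (the first parenthesis is symmetric and $p+q$ is invariant), so $S_{qp} = S_{pq}$, exactly as in the scalar identity $b_{ij} = b_{ji}$ of Theorem \ref{thm2}. At the entry level, each $\tilde{S}_t$ is symmetric, so the linear combination $S_{pq}$ is symmetric, i.e. $S_{pq}^t = S_{pq}$. Combining these gives $(S^t)_{pq} = S_{qp}^t = S_{pq}^t = S_{pq}$, hence $S^t = S$. Since $V = \hat{V}V_{k+1} = V_1\cdots V_{k+1}$ and $V^* T^{(k+1)} V = S$, this closes the induction. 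The only point requiring care is the bookkeeping of the two commuting sources of symmetry, namely the block-transpose symmetry inherited from the one-level formula and the entrywise symmetry from the induction hypothesis, which together, rather than either alone, yield symmetry of the full matrix; I expect this interplay, not any hard computation, to be the main thing to get right.
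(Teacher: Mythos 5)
Your proposal is correct, and it follows the same inductive skeleton that the paper uses for Theorem \ref{mainThm} and merely asserts carries over to this theorem (the paper omits the proof outright). Where you genuinely improve on that template is the final step: instead of splitting into parity cases and checking the four quadrants $Z_1,\dots,Z_4$ entry by entry as the paper does for $U(n)$, you observe that conjugation by $V(n_{k+1})\otimes I_{s_k}$ acts on the block-Toeplitz matrix $\tilde{S}$ exactly as in Theorem \ref{thm2}, with the scalars $t_{i-j}$ replaced by the blocks $\tilde{S}_{i-j}$ by linearity, so the closed formula
$S_{pq}=\tfrac{1}{2}(\tilde{S}_{p-q}+\tilde{S}_{q-p})+\tfrac{i}{2}(\tilde{S}_{p+q-n_{k+1}-1}-\tilde{S}_{n_{k+1}+1-p-q})$
is available at the block level and is manifestly invariant under $p\leftrightarrow q$. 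Your explicit separation of the two sources of symmetry --- block-transpose symmetry from this formula and entrywise symmetry of each $\tilde{S}_t$ from the induction hypothesis, combined via $(S^t)_{pq}=S_{qp}^t=S_{pq}^t=S_{pq}$ --- is precisely the point the paper leaves implicit, and your argument supplies the omitted proof in full.
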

\begin{proof} The proof will be omitted since it is similar to Theorem 3.1.

\end{proof}

\section{Examples}

Here are two examples to illustrate the constructions of the transition matrices given by Theorem \ref{mainThm} and Theorem \ref{Thm3.2} respectively .

\begin{example} Let \begin{align*}
 T=\left(\begin{array}{ccc|ccc}
i& 1& 0& 4& i& 1 \\
2 & i&1&i&4&i \\
3&2&i&1&i&4 \\ \hline
5&2&i&i&1&0 \\
0&5&2&2&i&1 \\
1&0&5&3&2&i
\end{array}\right),
\end{align*}
a $2$-level Toeplitz matrix of size $6,$ where $n_1=3$ and $n_2=2.$ By Theorem \ref{mainThm},

\begin{align*}
 U(n_1)=U(3)=\frac{1}{\sqrt{2}}\left(\begin{array}{ccc}
1 & 0 & i \\
0 & \sqrt{2} & 0 \\
1 & 0 & -i
\end{array}\right)\ \  {\rm and} \ \ U(n_2)=U(2)=\frac{1}{\sqrt{2}}\left(\begin{array}{cc}
1 &  i \\
1 &  -i
\end{array}\right).
\end{align*}

Then
\begin{align*} U_1=I_{n_2}\otimes U(n_1)=I_2\otimes U(3)=\frac{1}{\sqrt{2}}\left(\begin{array}{ccc|ccc}
1 & 0 & i & 0 & 0 & 0\\
0 & \sqrt{2} & 0 & 0 & 0 & 0\\
1 & 0 & -i & 0 & 0 & 0\\  \hline
0 & 0 & 0 & 1 & 0 & i \\
0 & 0 & 0 & 0 & \sqrt{2} & 0 \\
0 & 0 & 0 & 1 & 0 & -i
\end{array}\right),
\end{align*}
and
\begin{align*} U_2=U(n_2)\otimes I_{n_1}=U(2)\otimes I_3=\frac{1}{\sqrt{2}}\left(\begin{array}{ccc|ccc}
1 & 0 & 0 & i & 0 & 0\\
0 & 1 & 0 & 0 & i & 0\\
0 & 0 & 1 & 0 & 0 & i\\  \hline
1 & 0 & 0 & -i & 0 & 0 \\
0 & 1 & 0 & 0 & -i & 0 \\
0 & 0 & 1 & 0 & 0 & -i
\end{array}\right).
\end{align*}
So
 { \begin{align*}
 U^*_1TU_1=\frac{1}{2}\left(\begin{array}{ccc|ccc}
 3+2i & 3\sqrt{2} & 3i & 10 & 2\sqrt{2}i &0 \\
 3\sqrt{2} & 2i & \sqrt{2}i & 2\sqrt{2}i & 8 & 0 \\
 3i&\sqrt{2}i & -3+2i & 0 & 0 & 6 \\ \hline
 11+i & 2\sqrt{2} & 1+i & 3+2i & 3\sqrt{2} & 3i \\
 2\sqrt{2} & 10 & -2\sqrt{2}i & 3\sqrt{2} & 2i & \sqrt{2}i\\
 1+i & -2\sqrt{2}i& 9-i & 3i & \sqrt{2}i & -3+2i
\end{array}\right)
\end{align*}}
in which each block is symmetrized, that is the first level is symmetrized, and
 {\begin{align*}
 U^*_2U^*_1TU_1U_2=\frac{1}{4}\left(\begin{array}{ccc|ccc}
 27 + 5i & 8\sqrt{2} + 2\sqrt{2}i& 1 + 7i& - 1 + i& 2\sqrt{2} + 2\sqrt{2}i& - 1 + i\\
 8\sqrt{2} + 2\sqrt{2}i& 18 + 4i& 0& 2\sqrt{2} + 2\sqrt{2}i& 2i& 2\sqrt{2}\\
  1 + 7i& 0& 9 + 3i& - 1 + i& 2\sqrt{2}& 1 + 3i\\ \hline
  - 1 + i& 2\sqrt{2} + 2\sqrt{2}i& - 1 + i& - 15 + 3i&  4\sqrt{2} - 2\sqrt{2}i& - 1 + 5i\\
 2\sqrt{2} + 2\sqrt{2}i& 2i& 2\sqrt{2}& 4\sqrt{2} - 2\sqrt{2}i& - 18 + 4i& 4\sqrt{2}i\\
  - 1 + i& 2\sqrt{2}& 1 + 3i& - 1 + 5i& 4\sqrt{2}i& - 21 + 5i
\end{array}\right)
\end{align*}}
which is symmetric. The transition unitary matrix $U$ is given by
{\begin{align*}
 U=U_1U_2=\frac{1}{4}\left(\begin{array}{ccc|ccc}
 1& 0& i& i& 0& -1\\ 0& \sqrt{2}& 0& 0& \sqrt{2}i& 0 \\ 1 &0& -i& i& 0& 1 \\ \hline
 1& 0& i& -i& 0& 1\\0& \sqrt{2}& 0& 0& -\sqrt{2}i& 0\\1& 0& -i& -i& 0& -1
\end{array}\right).
\end{align*}}

One may use Theorem \ref{Thm3.2} as well. To construct the transition matrix, we construct $V(n_1)$ and $V(n_2)$ as the following:

\begin{align*}
 V(n_1)=V(3)=\frac{1}{\sqrt{2}}\left(\begin{array}{ccc}
1 & 0 & i \\
0 & 1+i & 0 \\
i & 0 & 1
\end{array}\right)\ \  {\rm and} \ \ V(n_2)=V(2)=\frac{1}{\sqrt{2}}\left(\begin{array}{cc}
1 &  i \\
i &  1
\end{array}\right).
\end{align*}

Then
\begin{align*} V_1=I_{n_2}\otimes V(n_1)=I_2\otimes V(3)=\frac{1}{\sqrt{2}}\left(\begin{array}{ccc|ccc}
1 & 0 & i & 0 & 0 & 0\\
0 & 1+i & 0 & 0 & 0 & 0\\
i & 0 & 1 & 0 & 0 & 0\\  \hline
0 & 0 & 0 & 1 & 0 & i \\
0 & 0 & 0 & 0 & 1+i & 0 \\
0 & 0 & 0 & i & 0 & 1
\end{array}\right),
\end{align*}
and
\begin{align*} V_2=V(n_2)\otimes I_{n_1}=V(2)\otimes I_3=\frac{1}{\sqrt{2}}\left(\begin{array}{ccc|ccc}
1 & 0 & 0 & i & 0 & 0\\
0 & 1 & 0 & 0 & i & 0\\
0 & 0 & 1 & 0 & 0 & i\\  \hline
i & 0 & 0 & 1 & 0 & 0 \\
0 & i & 0 & 0 & 1 & 0 \\
0 & 0 & i & 0 & 0 & 1
\end{array}\right).
\end{align*}
So
 { \begin{align*}
 V^*TV=\left(\begin{array}{ccc|ccc}
  -1/4-3/4i&3/2-i&7/4+1/4i&17/4-1/4i&1/2+i&3/4+1/4i \\
   3/2-i&1/2i&1/2&1/2+i&9/2&1/2 \\
   7/4+1/4i&1/2&1/4+7/4i&3/4+1/4i&  1/2&19/4+1/4i \\ \hline
   17/4-1/4i&1/2+i&3/4+1/4i&1/4-1/4i&3/2&5/4-1/4i \\
  1/2+i&9/2&1/2&3/2&3/2i&5/2+i \\
   3/4+1/4i&1/2&19/4+1/4i&5/4-1/4i&5/2+i&-1/4+13/4i
\end{array}\right)
,\end{align*}}
 where the transition unitary matrix $V=V_1V_2$ is given by
{\begin{align*}
 V=V_1V_2=\frac{1}{4}\left(\begin{array}{ccc|ccc}
 1/2& 0& 1/2i& 1/2i& 0& -1/2\\
 0& 1/2+1/2i& 0& 0& -1/2+1/2i& 0 \\
  1/2i &0& 1/2& -1/2& 0& 1/2i \\ \hline
 1/2i& 0& -1/2& 1/2& 0& 1/2i\\
 0& -1/2+1/2i& 0& 0& 1/2+1/2i& 0\\
 -1/2& 0& 1/2i& 1/2i& 0& 1
\end{array}\right).
\end{align*}}

\end{example}

\begin{example} Let \begin{align*}
 T=\left(\begin{array}{cc|cc|cc|cc}
2& 3& 4& 5& 1& 0& i& 3 \\
3 & 2&6&4&2&1&3&i \\ \hline
6&7&2&3&7&5&1&0 \\
8&6&3&2&5&7&2&1 \\ \hline
3&i&4&1&2&3&4&5 \\
i&3&1&4&3&2&6&4 \\ \hline
6&i&3&i&6&7&2&3 \\
i&6&i&3&8&6&3&2
\end{array}\right),
\end{align*}
a $3$-level Toeplitz matrix of size $8,$ where $n_1=n_2=n_3=2.$ By Theorem \ref{mainThm},
 $$U(n_1)=U(n_2)=U(n_3)=U(2)=\frac{1}{\sqrt{2}}\begin{pmatrix} 1&i \\
1 & -i \end{pmatrix}$$ and hence
\begin{align*}
 U_1=\frac{1}{\sqrt{2}} I_{n_3}\otimes I_{n_2} \otimes U(n_1)=\left(\begin{array}{cc|cc|cc|cc}
1& i& 0& 0& 0& 0& 0& 0 \\
1& -i& 0& 0& 0& 0& 0& 0 \\ \hline
0& 0& 1& i& 0& 0& 0& 0 \\
0& 0& 1& -i& 0& 0& 0& 0\\ \hline
0& 0& 0& 0& 1& i& 0& 0 \\
0& 0& 0& 0& 1& -i& 0& 0\\ \hline
0& 0& 0& 0& 0& 0&  1& i\\
0& 0& 0& 0& 0& 0&  1& -i
\end{array}\right),
\end{align*}

\begin{align*}
 U_2=\frac{1}{\sqrt{2}}I_{n_3}\otimes U(n_2) \otimes I_{n_1}=\left(\begin{array}{cc|cc|cc|cc}
1& 0& i& 0& 0& 0& 0& 0 \\
0& 1& 0& i& 0& 0& 0& 0 \\ \hline
1& 0& -i& 0& 0& 0& 0& 0 \\
0& 1& 0& -i& 0& 0& 0& 0 \\ \hline
0& 0& 0& 0& 1& 0& i& 0 \\
0& 0& 0& 0& 0& 1& 0& i\\ \hline
0& 0& 0& 0& 1& 0& -i& 0 \\
0& 0& 0& 0& 0& 1& 0& -i\\
\end{array}\right),
\end{align*}
and
\begin{align*}
 U_3= \frac{1}{\sqrt{2}}U(n_3)\otimes I_{n_2} \otimes I_{n_1}=\left(\begin{array}{cc|cc|cc|cc}
1& 0& 0& 0& i& 0& 0& 0 \\
0& 1& 0& 0& 0& i& 0& 0 \\ \hline
0& 0& 1& 0& 0& 0& i& 0 \\
0& 0& 0& 1& 0& 0& 0& i\\ \hline
1& 0& 0& 0& -i& 0& 0& 0 \\
0& 1& 0& 0& 0& -i& 0& 0\\ \hline
0& 0& 1& 0& 0& 0& -i& 0 \\
0& 0& 0& 1& 0& 0& 0& -i\\
\end{array}\right)
\end{align*}
respectively. So the transition unitary matrix is
\begin{align*}
 U=U_1U_2U_3=\frac{1}{2\sqrt{2}}\left(\begin{array}{cc|cc|cc|cc}
1& i& i& -1& i& -1& -1& -i \\
1& -i& i& 1& i& 1& -1& i \\ \hline
1& i& -i& 1& i& -1& 1& i \\
1& -i& -i& -1& i& 1& 1& -i\\ \hline
1& i& i& -1& -i& 1& 1& i \\
1& -i& i& 1& -i& -1& 1& i\\ \hline
1& i& -i& 1& -i& 1& -1& -i \\
1& -i& -i& -1& -i& -1& -1& i
\end{array}\right)
\end{align*}
and one can check that

{\begin{align*}
U^*TU=\frac{1}{8}\left(\begin{array}{cccc|cccc}
204 + 8i& 8i& 36i& 0& - 4 -4i& 4& 16 - 4i& 0\\
 8i& 8 - 4i& 0& 4 + 16i& 4& 8 + 32i& 0& 4\\
  36i& 0& -84& 0& 16 - 4i& 0& - 4 + 12i& 4\\
  0& 4 + 16i & 0 & -4i & 0 & 4 & 4 & -8i \\ \hline
  - 4 - 4i & 4 & 16 - 4i & 0 & 60 - 8i & 0 & -4i & 0\\
  4& 8 + 32i & 0 & 4 & 0 & - 48 + 4i & 0 & - 4 - 16i \\
  16 - 4i & 0 & - 4 + 12i & 4 & -4i& 0& -20 & -8i\\
  0& 4& 4& -8i & 0 & - 4 - 16i & -8i & 8 + 4i
\end{array}\right)
\end{align*}
symmetric.}

Now we are using Theorem \ref{Thm3.2} to symmetrize the same $3$-level Toeplitz matrix.

\begin{align*}
 V_1=\frac{1}{\sqrt{2}} I_{n_3}\otimes I_{n_2} \otimes V(n_1)=\left(\begin{array}{cc|cc|cc|cc}
1& i& 0& 0& 0& 0& 0& 0 \\
i& 1& 0& 0& 0& 0& 0& 0 \\ \hline
0& 0& 1& i& 0& 0& 0& 0 \\
0& 0& i& 1& 0& 0& 0& 0\\ \hline
0& 0& 0& 0& 1& i& 0& 0 \\
0& 0& 0& 0& i& 1& 0& 0\\ \hline
0& 0& 0& 0& 0& 0&  1& i\\
0& 0& 0& 0& 0& 0&  i& 1
\end{array}\right),
\end{align*}

\begin{align*}
 V_2=\frac{1}{\sqrt{2}}I_{n_3}\otimes V(n_2) \otimes I_{n_1}=\left(\begin{array}{cc|cc|cc|cc}
1& 0& i& 0& 0& 0& 0& 0 \\
0& 1& 0& i& 0& 0& 0& 0 \\ \hline
i& 0& 1& 0& 0& 0& 0& 0 \\
0& i& 0& 1& 0& 0& 0& 0 \\ \hline
0& 0& 0& 0& 1& 0& i& 0 \\
0& 0& 0& 0& 0& 1& 0& i\\ \hline
0& 0& 0& 0& i& 0& 1& 0 \\
0& 0& 0& 0& 0& i& 0& 1\\
\end{array}\right),
\end{align*}
and
\begin{align*}
 V_3= \frac{1}{\sqrt{2}}V(n_3)\otimes I_{n_2} \otimes I_{n_1}=\left(\begin{array}{cc|cc|cc|cc}
1& 0& 0& 0& i& 0& 0& 0 \\
0& 1& 0& 0& 0& i& 0& 0 \\ \hline
0& 0& 1& 0& 0& 0& i& 0 \\
0& 0& 0& 1& 0& 0& 0& i\\ \hline
i& 0& 0& 0& 1& 0& 0& 0 \\
0& i& 0& 0& 0& 1& 0& 0\\ \hline
0& 0& i& 0& 0& 0& 1& 0 \\
0& 0& 0& i& 0& 0& 0& 1\\
\end{array}\right)
\end{align*}
respectively. So the transition unitary matrix is
\begin{align*}
 V=V_1V_2V_3=\frac{1}{2\sqrt{2}}\left(\begin{array}{cc|cc|cc|cc}
1& 1& i& -1& i& -1& -1& -i \\
i& 1& -1& i& -1& i& -i& -1 \\ \hline
i& -1& 1& i& -1& -i& i& -1 \\
-1& i& i& 1& -i& -1& -1& i\\ \hline
1& -1& -1& -i& 1& i& i& -1 \\
-1& i& -i& -1& i& 1& -1& i\\ \hline
-1& -i& i& -1& i& -1& 1& i \\
-i& -1& -1& i& -1& i& i& 1
\end{array}\right)
\end{align*}
and one can check that

{\begin{align*}
V^*TV=\left(\begin{array}{cccc|cccc}
30-18i&34-6i&38-10i&54+14i&14-22i&6+2i&34+2i&18+2i\\
34-6i&22-18i&54+14i&38-2i&6+2i&14-14i&18+2i&34+2i\\
38-10i&54+14i&10+2i&22+14i&34+2i&18+2i&18+4i& 2+6i\\
54+14i&38-2i &22+14i &2+2i &18+2i &34+2i & 2+6i &18+22i \\ \hline
14-22i &6+2i & 34+2i & 18+2i & 2+2i & 14-10i & 42+2i & 50-14i\\
6+2i&14-14i &18+2i&34+2i &14-10i &10+2i &50-14i & 42+10i \\
34+2i &18+2i& 18+14i &2+6i & 42+2i& 50-14i& 22+14i & 26+2i\\
18+2i& 34+2i& 2+6i& 18+22i &50-14i & 42+10i & 26+2i & 30+14i
\end{array}\right)
\end{align*}}
 is symmetric and note that the resulting symmetric matrices are not necessarily the same.

\end{example}

\section{Symmetric matrices that are unitarily similar to Toeplitz matrices}

Let $T_n$ be an $n \times n$ $p$-level Toeplitz matrix. According to Theorem \ref{mainThm}, there exists a unitary matrix $U,$ such that $UT_nU^*$ is a symmetric matrix. However, the converse is not true, i.e., not every complex symmetric matrix is unitarily similar to a (multilevel) Toeplitz matrix (see Section 5 and Section 6 in \cite{TamLiu}). Denote $\mathcal{S}_{2^p}$ the set of all $2^p\times 2^p$ complex symmetric matrices. In this section, we provide the necessary and sufficient condition under which a matrix in $\mathcal{S}_{2^p}$ is similar to a $2^p\times 2^p$ $p$-level Toeplitz matrix under the unitary transformation given in Section 3.

Let $S\in \mathcal{S}_{2^p}.$ Let $q$ be a positive integer less than or equal to $p.$ Then $S$ can be written as
$$S=\begin{pmatrix}S_{11}& S_{12} & \ldots & S_{1r}  \\ S_{21}& S_{22} & \ldots & S_{2r} \\ \vdots&\vdots&\ddots&\vdots \\S_{r1}& S_{r2} & \ldots & S_{rr}\end{pmatrix}$$
where $r=2^{p-q}$ and each $S_{ij}$ is a $2^q \times 2^q$ matrix for $i,j=1,2,\ldots,r.$ For $i,j=1,2,\ldots, r,$ each $S_{ij}$ is called a $q$-level block of $S$ and $S$ is said to have $q$-level constant anti-diagonals if each $S_{ij}$ has a constant anti-diagonal. $S$ having constant anti-diagonals at each level means that $S$ has $q$-level constant anti-diagonals for all $q=1,2,\ldots,n,$

\begin{example} Let $p=2,$ and $$S=\begin{pmatrix}s_{11} & s_{12}  & s_{13} & s_{14} \\ s_{21} & s_{22}  & s_{23} & s_{24} \\ s_{31} & s_{32}  & s_{33} & s_{34} \\ s_{41} & s_{42}  & s_{43} & s_{44} \\\end{pmatrix}\in \mathcal{S}_{2^2}.$$ $S$ having $1$-level constant anti-diagonals means that
\begin{equation}\label{1level}s_{14}=s_{23}, \ \ s_{32}=s_{41},\ \  s_{12}=s_{21}, \ \ {\rm and}\ \ s_{34}=s_{43}.\end{equation}
$S$ having $2$-level constant anti-diagonals means that
\begin{equation}\label{2level}s_{14}=s_{23}= s_{32}=s_{41}.\end{equation}
$S$ having constant diagonals at each level means both \eqref{1level} and \eqref{2level}.
\end{example}

Given a positive integer $p.$ Let $n_1=n_2=\ldots=n_{p-1}=n_p=2.$  Then let
$$\mathcal{U}=\frac{1}{\sqrt{2}}\begin{pmatrix}1 & i \\ 1 & -i  \end{pmatrix}$$
and
$$U_k=I_{n_p}\otimes I_{n_{p-1}} \otimes \ldots \otimes I_{n_{k+1}}\otimes \mathcal{U} \otimes I_{n_{k-1}}\otimes \ldots \otimes I_{n_2} \otimes I_{n_1}$$ for $k=1,2,\ldots, p.$ Denote
\begin{equation}\label{U} U^{(p)}=\prod_{j=1}^p U_j.\end{equation}

\begin{lemma} \label{lm5.1}

Let $T^{(p)}$ be a $2^p\times 2^p$ $p$-level Toeplitz matrix. Let $U$ be the unitary matrix defined by \eqref{U}. Then the complex symmetric matrix $$ S^{(p)}=(U^{(p)})^*T^{(p)}U^{(p)}$$ has constant anti-diagonals at each level.

\end{lemma}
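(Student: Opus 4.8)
The plan is to reduce everything to the Kronecker structure that becomes available because every $n_i=2$. Since the factors $U_1,\dots,U_p$ act on pairwise disjoint tensor slots and each one inserts the \emph{same} $2\times2$ block $\mathcal{U}$, they commute and their product collapses to a single Kronecker power, $U^{(p)}=\mathcal{U}^{\otimes p}$. Introducing the three elementary $2\times2$ shift matrices $E_0=I_2$, $E_1$ (with its single nonzero entry in position $(2,1)$) and $E_{-1}$ (nonzero entry in position $(1,2)$), the defining property of a $p$-level Toeplitz matrix with all $n_i=2$ is exactly that its entries depend only on the tuple of level-differences $\mathbf d=(d_p,\dots,d_1)\in\{-1,0,1\}^p$, which I would record as
$$T^{(p)}=\sum_{\mathbf d} t_{\mathbf d}\,E_{d_p}\otimes E_{d_{p-1}}\otimes\cdots\otimes E_{d_1}.$$
Because $(\mathcal{U}^{\otimes p})^{*}=(\mathcal{U}^{*})^{\otimes p}$ and conjugation distributes over tensor products, conjugating term by term gives
$$S^{(p)}=(U^{(p)})^{*}T^{(p)}U^{(p)}=\sum_{\mathbf d} t_{\mathbf d}\,A_{d_p}\otimes A_{d_{p-1}}\otimes\cdots\otimes A_{d_1},\qquad A_d:=\mathcal{U}^{*}E_d\,\mathcal{U}.$$

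Next I would compute the three building blocks explicitly,
$$A_0=I_2,\qquad A_1=\frac12\begin{pmatrix}1&i\\ i&-1\end{pmatrix},\qquad A_{-1}=\frac12\begin{pmatrix}1&-i\\ -i&-1\end{pmatrix}.$$
Each $A_d$ is symmetric (in agreement with Theorem \ref{mainThm}), and in particular each has a constant anti-diagonal, with values $0,\ \tfrac{i}{2},\ -\tfrac{i}{2}$ respectively.

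The key structural fact I would then isolate is how the anti-diagonal of a Kronecker product behaves. For an $m\times m$ matrix $M$ and an $n\times n$ matrix $N$, a short index computation shows that, under the standard Kronecker ordering, the main anti-diagonal entries of $M\otimes N$ are precisely the products $M_{a,\,m+1-a}\,N_{b,\,n+1-b}$; hence if $M$ and $N$ each have constant anti-diagonals, so does $M\otimes N$. Iterating, $A_{d_q}\otimes\cdots\otimes A_{d_1}$ has a constant anti-diagonal for every choice of $(d_q,\dots,d_1)$. Finally, to read off the $q$-level block structure I would split the first $p-q$ tensor factors (the coarse levels $p,\dots,q+1$) from the last $q$ factors: for a Kronecker product $P\otimes Q$ the $(I,J)$ block of size $2^q$ is the scalar $P_{IJ}$ times $Q$, so each $q$-level block $S_{IJ}$ of $S^{(p)}$ is a linear combination of the matrices $A_{d_q}\otimes\cdots\otimes A_{d_1}$. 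Since each of these has a constant anti-diagonal, the corresponding linear combination of their anti-diagonals is again constant, so every block $S_{IJ}$ has a constant anti-diagonal; as $q$ was arbitrary, this is exactly the assertion that $S^{(p)}$ has constant anti-diagonals at each level.

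The explicit evaluation of $A_0,A_1,A_{-1}$ and the symmetry of $S^{(p)}$ are immediate, so the only step demanding genuine care is the bookkeeping that matches tensor-factor order with the block partition and pins down the anti-diagonal of a Kronecker product; once that identity is in hand the result drops out. Unlike the proof of Theorem \ref{mainThm}, no separate treatment of odd and even sizes is needed here, since every level has size two.
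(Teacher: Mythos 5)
Your proof is correct, but it takes a genuinely different route from the paper's. The paper argues by induction on $p$: it peels off the outermost level, uses the inductive hypothesis to replace the blocks $T^{(m)}_{0},T^{(m)}_{\pm1}$ by symmetric matrices with constant anti-diagonals at each level, and then conjugates by $\mathcal{U}\otimes I^{(m)}$ to get an explicit $2\times2$ block formula for $S^{(m+1)}$ whose four blocks are linear combinations of the $S^{(m)}_{j}$ and whose two off-diagonal blocks coincide; the anti-diagonal property at every level is then read off from that formula. You instead collapse $U^{(p)}$ to $\mathcal{U}^{\otimes p}$, expand $T^{(p)}$ in the elementary-tensor basis $E_{d_p}\otimes\cdots\otimes E_{d_1}$, and reduce the whole statement to one structural fact: the main anti-diagonal of $M\otimes N$ consists of the products $M_{a,\,m+1-a}N_{b,\,n+1-b}$, so constancy of anti-diagonals is preserved by Kronecker products. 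All your computations check out ($U^{(p)}=\mathcal{U}^{\otimes p}$ because the $U_j$ act on disjoint tensor slots, and $A_0=I_2$ and $A_{\pm1}=\mathcal{U}^{*}E_{\pm1}\mathcal{U}$ are as you state, each with a constant anti-diagonal). What your version buys is a closed-form expression $S^{(p)}=\sum_{\mathbf d}t_{\mathbf d}\,A_{d_p}\otimes\cdots\otimes A_{d_1}$ that treats all block levels $q$ uniformly in a single stroke, whereas the paper's induction must re-verify the top-level anti-diagonal at each step (via the equality of the two off-diagonal blocks), a point it leaves largely implicit; the paper's argument is in turn more elementary and parallels its proof of Lemma \ref{lm5.2}. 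The only step in your write-up demanding real care is the index bookkeeping matching tensor-factor order with the $q$-level block partition, and you have identified and handled it correctly.
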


\begin{proof}

We use induction on $p.$

Base case: When $p=1,$ $S^{(1)}$ has a constant anti-diagonal due to the symmetry of $S^{(1)}.$

\medskip

Inductive assumption: Suppose it is true for $p=m.$ That is the $2^m \times 2^m $ complex symmetric matrix $S^{(m)}=(U^{(m)})^*T^{(m)}U^{(m)}$ has constant anti-diagonals on each level.

\medskip

Inductive step: We need to show for $p=m+1,$ the $2^{m+1} \times 2^{m+1}$ complex symmetric matrix $S^{m+1}=(U^{(m+1)})^*T^{(m+1)}U^{(m+1)}$ has constant anti-diagonals on each level.

\medskip

First note that $$T^{(m+1)}=\begin{pmatrix}T_0^{(m)}& T_{-1}^{(m)}\\ T_1^{(m)} & T_0^{(m)}\end{pmatrix}$$
where $T_0^{(m)}, T_{-1}^{(m)}$ and $T_1^{(m)}$ are $2^m \times 2^m$ $m$-level Toeplitz matrices. According to the inductive assumption, there exists a unitary matrix $U^{(m)}$ such that all $$S_0^{(m)}=(U^{(m)})^*T_0^{(m)}U^{(m)}, \ \ S_{-1}^{(m)}=(U^{(m)})^*T_{-1}^{(m)}U^{(m)}\ \  {\rm \ and} \ \ S_{1}^{(m)}=(U^{(m)})^*T_{1}^{(m)}U^{(m)}$$ are complex symmetric matrices with constant anti-diagonals at each level. That is,
\[ S^{(m)}=\begin{pmatrix}U^{(m)} & \\ & U^{(m)} \end{pmatrix}^*\begin{pmatrix}T_0^{(m)}& T_{-1}^{(m)}\\ T_1^{(m)} & T_0^{(m)}\end{pmatrix} \begin{pmatrix}U^{(m)} & \\ & U^{(m)} \end{pmatrix}=\begin{pmatrix}S_0^{(m)} & S_{-1}^{(m)}\\ S_1^{(m)}& S_0^{(m)} \end{pmatrix}.  \]
Let $$U_{m+1}=\mathcal{U}\otimes(\otimes_{k=1}^m I_2)=\frac{1}{\sqrt{2}}\begin{pmatrix}I^{(m)} & iI^{(m)} \\I^{(m)} & -iI^{(m)}\end{pmatrix}$$ where $I^{(m)}$ is the $2^m \times 2^m$ identity matrix. Then
\[ S^{(m+1)}=U_{m+1}^* S^{(m)}U_{m+1}=\frac{1}{2}\begin{pmatrix}2S_{0}^{(m)} + S_{-1}^{(m)} +S_{1}^{(m)} & -iS_{-1}^{(m)}+iS_{1}^{(m)} \\ -iS_{-1}^{(m)}+iS_{1}^{(m)} & 2S_{0}^{(m)} - S_{-1}^{(m)} -S_{1}^{(m)} \end{pmatrix}\] which has constant anti-diagonals at each level.

\end{proof}

\begin{lemma}\label{lm5.2} Let $S$ be a $2^p\times 2^p$ complex symmetric matrix. If $S$ has constant anti diagonals for each level, then $U^{(p)}S(U^{(p)})^*$ is a $p$-level Toeplitz matrix.

\end{lemma}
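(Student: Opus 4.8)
The plan is to prove Lemma \ref{lm5.2} as the converse of Lemma \ref{lm5.1}, again by induction on $p$, mirroring the block decomposition used above. The base case $p=1$ is immediate: a $2\times 2$ complex symmetric matrix $S=\begin{pmatrix} a & b \\ b & c\end{pmatrix}$ automatically has a constant anti-diagonal (both off-diagonal entries equal $b$), and a direct computation of $U^{(1)}S(U^{(1)})^*=\mathcal{U}S\mathcal{U}^*$ should yield a $1$-level Toeplitz matrix. In fact, since Lemma \ref{lm5.1} already shows $(U^{(1)})^*T^{(1)}U^{(1)}$ is symmetric for every $1$-level Toeplitz $T^{(1)}$, and $\mathcal{U}$ is a fixed invertible map between a $3$-dimensional space of symmetric matrices and a $3$-dimensional space of $2\times2$ Toeplitz matrices, the base case reduces to a dimension/bijectivity check.

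For the inductive step, I would assume the statement for $p=m$ and take a $2^{m+1}\times 2^{m+1}$ complex symmetric $S$ with constant anti-diagonals at every level. The key structural observation is that $S$ having $q$-level constant anti-diagonals for all $q=1,\dots,m+1$ forces $S$ to split as a $2\times 2$ block matrix $\begin{pmatrix} A & B \\ B^{t}? & \dots\end{pmatrix}$ whose blocks are themselves $2^m\times 2^m$ symmetric matrices with constant anti-diagonals at each level. I would write $S=\begin{pmatrix} S_{11} & S_{12} \\ S_{21} & S_{22}\end{pmatrix}$ and use the highest-level ($q=m+1$) constant-anti-diagonal condition together with the overall symmetry $S=S^{t}$ to pin down the precise relations among $S_{11},S_{12},S_{21},S_{22}$. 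The natural candidates, suggested by reading the formula in Lemma \ref{lm5.1} backwards, are $S_{11}=S_{22}$ (so the diagonal blocks coincide) and $S_{12}=S_{21}$ with $S_{12}$ symmetric; these are exactly what is needed for $U^{(m+1)}S(U^{(m+1)})^*$ to come out block-Toeplitz.

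Concretely, I would factor $U^{(m+1)}=U^{(m)}_{\mathrm{inner}}\,U_{m+1}$ in the order that lets me apply $U_{m+1}=\mathcal{U}\otimes I^{(m)}$ first, conjugate the $2\times2$ block matrix by it, and read off that the resulting diagonal blocks are $S_{11}-\tfrac{i}{?}\dots$ — i.e. invert the explicit formula $S^{(m+1)}=\tfrac12\begin{pmatrix} 2S_0+S_{-1}+S_1 & i(S_1-S_{-1}) \\ i(S_1-S_{-1}) & 2S_0-S_{-1}-S_1\end{pmatrix}$ to solve for $S_0,S_{-1},S_1$ in terms of the blocks of $S$. This recovers three $2^m\times 2^m$ matrices that are symmetric with constant anti-diagonals at each level, so by the induction hypothesis each of $U^{(m)}S_0(U^{(m)})^{*}$, $U^{(m)}S_{-1}(U^{(m)})^{*}$, $U^{(m)}S_{1}(U^{(m)})^{*}$ is an $m$-level Toeplitz matrix. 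Reassembling via the remaining inner factor $I_2\otimes U^{(m)}$ then produces a block-Toeplitz matrix with $m$-level Toeplitz blocks, which is by definition an $(m+1)$-level Toeplitz matrix, completing the induction.

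The main obstacle I anticipate is the bookkeeping in the inductive step: showing that the constant-anti-diagonal conditions at levels $1,\dots,m$ for the full matrix $S$ descend exactly to the analogous conditions for each of the three reconstructed blocks $S_0,S_{-1},S_1$, and not merely for $S_{11},S_{12}$ themselves. Because $S_0,S_{-1},S_1$ are \emph{linear combinations} of the blocks $S_{ij}$, I must check that the anti-diagonal constancy is preserved under these combinations — this is linear and should hold entrywise, but it is where the argument must be carried out carefully rather than waved through. A secondary point worth verifying is that the tensor factors $U_{m+1}=\mathcal{U}\otimes I^{(m)}$ and $I_2\otimes U^{(m)}$ commute appropriately so that $U^{(m+1)}=(I_2\otimes U^{(m)})\,U_{m+1}$ really does equal $\prod_{j=1}^{m+1}U_j$, matching the definition in \eqref{U}; this is the mixed-product property of the Kronecker product and should follow directly, but it anchors the whole reassembly step.
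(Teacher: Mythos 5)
Your overall strategy --- induction on $p$, splitting $S$ into four $2^m\times 2^m$ blocks, applying the inductive hypothesis to the blocks, and reassembling after conjugation by $U_{m+1}=\mathcal{U}\otimes I^{(m)}$ --- is exactly the route the paper takes, and your base case and your reassembly step (including the commuting of the Kronecker factors) are fine. The problem is precisely the step you flag as the crux: you need $S_{21}=S_{12}$ and you need $S_{12}$ itself to be a \emph{symmetric} matrix with constant anti-diagonals at each level, so that the inductive hypothesis applies to it. You propose to extract this from the top-level constant-anti-diagonal condition together with $S=S^{t}$, but for $p=m+1\geq 3$ these hypotheses do not deliver it. Concretely, take $p=3$ and let $S$ be the symmetric $8\times 8$ matrix with $s_{1,7}=s_{7,1}=1$ and all other entries $0$. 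Every $1$-, $2$- and $3$-level block anti-diagonal of $S$ is identically zero, so $S$ satisfies all hypotheses of the lemma; yet its top-right $4\times 4$ block has $(1,3)$ entry equal to $1$ and $(3,1)$ entry equal to $0$, so $S_{12}$ is not symmetric and $S_{21}=S_{12}^{t}\neq S_{12}$. The anti-diagonal conditions constrain only the anti-diagonal entries of each block, and entries such as $s_{1,7}$ and $s_{3,5}$ lie on no block anti-diagonal at any level, so no amount of bookkeeping recovers the symmetry of $S_{12}$. (Your proposed relation $S_{11}=S_{22}$ is likewise not implied by the hypotheses, but unlike the symmetry of $S_{12}$ it is not needed: after conjugation the two diagonal blocks are both $\tfrac12(T_{11}+T_{22})$ automatically.)

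You should know that the paper's own proof passes over this exact point: it writes $S^{(m+1)}$ with both off-diagonal blocks equal to $S_{12}^{(m)}$ and asserts without argument that all blocks are symmetric with constant anti-diagonals at each level, so you have put your finger on a genuine gap rather than merely a bookkeeping chore. Since $U^{(p)}=\mathcal{U}^{\otimes p}$ and the $2^p\times 2^p$ $p$-level Toeplitz matrices are exactly the span of the $p$-fold tensor products of $2\times 2$ Toeplitz matrices, the image of the Toeplitz matrices under $T\mapsto (U^{(p)})^{*}TU^{(p)}$ is the $3^p$-dimensional span of tensor products $A_1\otimes\cdots\otimes A_p$ of symmetric $2\times 2$ matrices; for $p=3$ this has dimension $27$, whereas the symmetric matrices with constant anti-diagonals at each level form a $29$-dimensional space (the example above lies in the difference). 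So the hypothesis of the lemma must be strengthened for $p\geq 3$ --- for instance by requiring every $q$-level block $S_{ij}$, and not just $S$ itself, to be symmetric --- and with that stronger hypothesis (and the correspondingly strengthened conclusion in Lemma \ref{lm5.1}) your induction closes essentially verbatim.
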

\begin{proof}

We use induction on $p.$

\medskip

Base case: For $p=1,$ Let $S^{(1)}=\begin{pmatrix} s_{11} & s_{12} \\ s_{12}& s_{22} \end{pmatrix}.$ Then
\begin{eqnarray} U^{(1)}S^{(1)}(U^{(1)})^* &=& \frac{1}{2} \begin{pmatrix} 1&i \\ 1& -i \end{pmatrix} \begin{pmatrix} s_{11} & s_{12} \\ s_{12}& s_{22} \end{pmatrix} \begin{pmatrix} 1&1 \\-i&i \end{pmatrix} \nonumber \\
                                     &=& \frac{1}{2} \begin{pmatrix} s_{11}+s_{22} & s_{11}-s_{22}+2is_{12} \\ s_{11}+s_{22}-2is_{12} & s_{11}+s_{22} \nonumber   \end{pmatrix}
\end{eqnarray}a $1$-level Toeplitz matrix.
\medskip

Inductive assumption: Suppose it is true for $p=m.$ That is, for a $2^m \times 2^m$ complex symmetric matrix $S^{(m)}$ with constant anti-diagonals at each level, $U^{(m)}S^{(m)}(U^{(m)})^*$ is an $m$-level Toeplitz matrix.

\medskip

Inductive step: We need to show for $p=m+1,$  if $S^{(m+1)}$ is a $2^{m+1}\times 2^{m+1}$ complex symmetric matrix with constant anti-diagonals at each level, then $U^{(m+1)}S^{(m+1)}(U^{(m+1)})^*$ is a $(m+1)$-level Toeplitz matrix.

Note that $$S^{(m+1)}=\begin{pmatrix}S_{11}^{(m)}& S_{12}^{(m)}\\ S_{12}^{(m)} & S_{22}^{(m)}\end{pmatrix}$$
where $S_{11}^{(m)}, S_{12}^{(m)}$ and $S_{22}^{(m)}$ are $2^m \times 2^m$ complex symmetric matrices with constant anti-diagonals at each level. According to the inductive assumption, there exists a unitary matrix $U^{(m)}$ such that all $$T_{11}^{(m)}=(U^{(m)})S_{11}^{(m)}(U^{(m)})^*, \ \ T_{12}^{(m)}=(U^{(m)})S_{12}^{(m)}(U^{(m)})^*\ \  {\rm \ and} \ \ T_{22}^{(m)}=(U^{(m)})S_{22}^{(m)}(U^{(m)})^*$$ are $m$-level Toeplitz matrices. That is,
\[ T=\begin{pmatrix}U^{(m)} & \\ & U^{(m)} \end{pmatrix}\begin{pmatrix}S_{11}^{(m)}& S_{12}^{(m)}\\ S_{12}^{(m)} & S_{22}^{(m)}\end{pmatrix} \begin{pmatrix}U^{(m)} &  \\ & U^{(m)}\end{pmatrix}^*=\begin{pmatrix}T_{11}^{(m)} & T_{12}^{(m)}\\ T_{12}^{(m)}& T_{22}^{(m)} \end{pmatrix}.\]
We define $U_{m+1}$ as  $$U_{m+1}=(\otimes_{k=1}^m I_2) \otimes \mathcal{U}=\frac{1}{\sqrt{2}}\begin{pmatrix}I^{(m)} & iI^{(m)} \\I^{(m)} & -iI^{(m)}\end{pmatrix},$$ where $I^{(m)}$ is the $2^m \times 2^m$ identity matrix.  Then
\[ T^{(m+1)}=U_{m+1} T(U_{m+1})^*=\frac{1}{2}\begin{pmatrix} T_{11}^{(m)} +T_{22}^{(m)} & T_{11}^{(m)}-T_{22}^{(m)}+2iT_{12}^{(m)} \\  T_{11}^{(m)}+T_{22}^{(m)}-2iT_{12}^{(m)} & T_{11}^{(m)} +T_{22}^{(m)} \end{pmatrix}\] is a $(m+1)$-level Toeplitz matrix.

\end{proof}

Combine Lemma \ref{lm5.1} and Lemma \ref{lm5.2}, we have the following theorem.
\begin{theorem} Let $S$ be a $2^p \times 2^p$ complex symmetric matrix. There exists a $2^p \times 2^p$ $p$-level Toeplitz matrix $T^{(p)}$ such that $$S=(U^{(p)})^*T^{(p)}U^{(p)}$$ if and only if $S$ has constant anti-diagonals at each level.
\end{theorem}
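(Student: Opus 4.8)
The plan is to prove the theorem as an immediate consequence of the two lemmas already established, since it is stated precisely as their combination. The statement is a biconditional, so I would organize the proof into its two directions and cite the relevant lemma for each.

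First I would prove the forward direction (necessity). Suppose there exists a $2^p \times 2^p$ $p$-level Toeplitz matrix $T^{(p)}$ with $S = (U^{(p)})^* T^{(p)} U^{(p)}$. By Theorem \ref{mainThm} (applied in the special case $n_1 = \cdots = n_p = 2$, for which $U^{(p)}$ as defined in \eqref{U} is exactly the transition matrix), $S$ is complex symmetric. Then Lemma \ref{lm5.1} applies verbatim: it asserts that $S^{(p)} = (U^{(p)})^* T^{(p)} U^{(p)}$ has constant anti-diagonals at each level. So this direction is a direct invocation with nothing left to check.

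Next I would prove the reverse direction (sufficiency). Suppose $S$ is a $2^p \times 2^p$ complex symmetric matrix having constant anti-diagonals at each level. By Lemma \ref{lm5.2}, the matrix $U^{(p)} S (U^{(p)})^*$ is a $p$-level Toeplitz matrix; call it $T^{(p)}$. Since $U^{(p)}$ is unitary, we may rearrange this to recover $S = (U^{(p)})^* T^{(p)} U^{(p)}$, exhibiting the required Toeplitz matrix. This establishes the existence claim and completes the biconditional.

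Because both lemmas are already proved, there is no genuine obstacle here; the only point requiring a word of care is consistency of the two conjugation conventions. Lemma \ref{lm5.1} conjugates a Toeplitz matrix as $(U^{(p)})^* T U^{(p)}$ to produce a symmetric matrix with level-constant anti-diagonals, while Lemma \ref{lm5.2} conjugates such a symmetric matrix as $U^{(p)} S (U^{(p)})^*$ to recover a Toeplitz matrix; since these are inverse unitary conjugations by the same $U^{(p)}$, composing them returns the original matrix, and the two lemmas fit together exactly. I would note this explicitly so the reader sees that the forms quoted in the two lemmas are genuinely inverse to one another, and then conclude that the two implications together yield the stated equivalence.
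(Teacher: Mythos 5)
Your proposal is correct and follows exactly the paper's route: the paper states the theorem as the immediate combination of Lemma \ref{lm5.1} (necessity) and Lemma \ref{lm5.2} (sufficiency), which is precisely your two-direction argument. Your added remark that the two conjugations are inverse to one another is a sensible clarification but does not change the approach.
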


\noindent{\bf Acknowledgements}

We would like to thank Drs.Banani Dhar and Sarita Nemani for insightful discussions.

\bibliographystyle{plain}

\end{document}